\documentclass[11pt]{article}
\usepackage[utf8]{inputenc}
\usepackage[T1]{fontenc}
\DeclareUnicodeCharacter{0229}{\k{e}}
\usepackage{lmodern}
\usepackage{subfiles}
\usepackage{enumitem}
\setenumerate{topsep=6pt,ref={\normalfont(\roman*)},label={\normalfont(\roman*)}, itemsep=0pt} 

\usepackage{amsfonts}
\usepackage{amsthm}
\usepackage{amsmath}
\usepackage{amssymb}
\usepackage{amscd}
\usepackage{mathrsfs}
\usepackage{mathtools}
\usepackage{bbm}
\usepackage{esint}

\usepackage[margin=2.7cm]{geometry}
\usepackage{setspace}
\usepackage{indentfirst}
\usepackage{graphicx}
\usepackage{graphics}
\usepackage{lscape}
\usepackage{pgf,tikz}
\usepackage{tikz-cd}
\usepackage{color}
\usepackage{pict2e}
\usepackage{epic}
\usepackage{epstopdf}
\usepackage{titlesec, titlefoot}
\titleformat{\section}[block]{\Large\bfseries\filcenter}{\thesection}{1em}{}
\titleformat{\part}[block]{\LARGE\bfseries\filcenter}{Part \thepart.}{0.5em}{}
\usepackage{commath}
\usepackage{float}
\usepackage{caption}
\usepackage{etoolbox}
\usepackage{combelow}

\usepackage[nocompress]{cite}
\usepackage[hidelinks,bookmarksdepth=3]{hyperref}
\hypersetup{bookmarksopen=true} 

\graphicspath{{./Pictures/}}
\allowdisplaybreaks

\expandafter\def\expandafter\normalsize\expandafter{%
\normalsize
\setlength\abovedisplayskip{6pt}
\setlength\belowdisplayskip{6pt}
\setlength\abovedisplayshortskip{6pt}
\setlength\belowdisplayshortskip{6pt}
}

\theoremstyle{plain}

\renewcommand*\thesection{\arabic{section}}

\newtheorem{theorem}{Theorem}
\newtheorem{lemma}[theorem]{Lemma}
\newtheorem*{lemma*}{Lemma}

\newtheorem{corollary}[theorem]{Corollary}

\theoremstyle{definition}

\expandafter\let\expandafter\oldproof\csname\string\proof\endcsname
\let\oldendproof\endproof
\renewenvironment{proof}[1][\proofname]{%
\oldproof[\upshape \bfseries #1]%
}{\oldendproof}

\makeatletter
\def\@makechapterhead#1{%
\vspace*{50\p@}%
{\parindent \z@ \raggedright \normalfont
\interlinepenalty\@M
\Huge\bfseries  \thechapter.\quad #1\par\nobreak
\vskip 40\p@
}}
\makeatother

\let\textcaron\v
\renewcommand{\v}[1]{\ifmmode\check{#1}\else\textcaron{#1}\fi}

\def \R {\mathbb{R}}
\def \Z {\mathbb{Z}}
\def \C{\mathbb{C}}

\def \D{\textup{D}}

\def \d{\,\textup{d}}

\def \p{\partial}
\def \mc{\mathcal}
\def \mb{\mathbb}

\def \tp{\textup}

\renewenvironment{thebibliography}[1]{
  \begin{oldthebibliography}{#1}
    \setlength{\itemsep}{0.5pt}
    \setlength{\parskip}{0.5pt}
}{
  \end{oldthebibliography}
}

\begin{document}

	\title{\textbf{Sharp Higher Integrability Theory, Part I:\\ The $L^p$-norm of the Beurling--Ahlfors transform}}
		
	\author{
  \parbox{0.95\textwidth}{\centering
    \mbox{Andrea Agazzi},
    \mbox{Kari Astala},
    \mbox{Giuseppe Bruno},
    \mbox{Gabriele Cassese},
    \mbox{Hang Chang},
    \mbox{Daniel Faraco},
    \mbox{Andr\'e Guerra},
    \mbox{Bernd Kirchheim},
    \mbox{Aleksis Koski},
    \mbox{Jan Kristensen},
    \mbox{Federico Pasqualotto},
    \mbox{Istv\'an Prause},
    \mbox{Riccardo Tione},
    \mbox{Vladim\'ir \v{S}ver\'ak},
    \mbox{L\'aszl\'o Sz\'ekelyhidi}
  }
}
				
	\date{}
	
	\maketitle
	
	\begin{abstract}
    We confirm a conjecture posed by T.\ Iwaniec in 1982: the $L^p$ norm of the Beurling--Ahlfors transform is $p-1$ if $p\geq 2$, and $(p-1)^{-1}$ if $1<p\leq 2$.
    \end{abstract}

\begingroup
\renewcommand{\abstractname}{AI disclosure}
\begin{abstract}
The key ideas of this paper were generated by ChatGPT 6.0 Astra, without substantial human assistance. The proof presented here does not belong to a single author, but rather to a community, imperfectly represented in the list of authors of this manuscript, whose efforts across the last four decades make the solution of this beautiful problem finally possible.
\end{abstract}
\endgroup


\begingroup
\renewcommand{\abstractname}{}
\begin{abstract}
\center
\textbf{This is a preliminary version of the paper. 
}
\end{abstract}
\endgroup

\section{Introduction}

This work is the first in a series of papers, and it serves two main purposes: one of them is mathematical, and the other is meta-mathematical.

\medskip

The mathematical purpose is to develop a comprehensive sharp higher-integrability theory, with applications to Harmonic Analysis, Elliptic PDEs, and Geometric Function Theory. In this first paper in the series we prove the following result. Identify $\R^2$ with $\C$ and write a real-linear map $A$ in the form
\[
Az=a_+z+a_-\bar z.
\]
With $|\cdot|$ denoting the operator norm, we then have $|A|=|a_+|+|a_-|$ and $\det A=|a_+|^2-|a_-|^2$. Now define $L\colon \R^{2\times 2}\to \R$ by
\begin{equation}\label{eq:L-definition}
L(A)=
\begin{cases}
|a_+|^2-|a_-|^2&\text{if } |A|\leq1,\\
2|a_+|-1&\text{if } |A|>1.
\end{cases}
\end{equation}
This integrand was discovered independently in \cite{Sverak1990,Burkholder1989}. We prove:

\begin{theorem}\label{thm:L-quasiconvexity}
Let $Q=\R^2/\Z^2$, with $|Q|=1$. For all $A\in\R^{2\times 2}$ and all $\varphi\in C^\infty_{\tp{per}}(Q;\R^2)$ we have
\[
\int_Q\bigl[L(A+\D\varphi)-L(A)\bigr]\d x\geq0.
\]
\end{theorem}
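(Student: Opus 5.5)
The plan is to combine the known \emph{rank-one convexity} of $L$ (due to Burkholder and \v{S}ver\'ak) with a \emph{holomorphic-motion / quasiconformal} argument in the spirit of Astala--Iwaniec--Prause--Saksman. That work already proves the inequality at $A=\mathrm{Id}$ for quasiconformal perturbations. What remains is to remove both restrictions: the base matrix $A$ and the quasiconformality of $A+\D\varphi$.

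First I would record the elementary reductions. $L(A)$ depends only on $(|a_+|,|a_-|)$, so $L$ is invariant under $A\mapsto R_1AR_2$ with $R_i\in SO(2)$. After a change of variables in $Q$ (rotations are not periodic, so this is done by approximating $\varphi$ by compactly supported maps on large squares, which is equivalent for periodic quasiconvexity), we may take $a_\pm\ge0$.

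If $|A|<1$ and $|A+\D\varphi|\le 1$ everywhere, then $L=\det$ along the whole competitor. Since $\det$ is a null Lagrangian, equality holds. So the content lies entirely in the region $\{|A+\D\varphi|>1\}$. There $L(X)=2|x_+|-1$, and one checks the pointwise inequality
\[
L(X)\ge \det X \quad\text{for all } X.
\]
Equivalently, $(|x_+|-1)^2\ge |x_-|^2$ when $|X|>1$. This single inequality already settles the case $|A|\le 1$: we get $\int_Q L(A+\D\varphi)\ge\int_Q\det(A+\D\varphi)=\det A=L(A)$.

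The real case is $|A|>1$, where $L(A)=2|a_+|-1$. Here I would use the Burkholder-type extremal structure. The set of matrices where $L$ is affine along rank-one directions is foliated. For $|A|>1$, I would attempt to construct a \emph{calibration}: a polyconvex function $P(X)=\alpha\det X+\ell(X)+c$ (with $\ell$ linear) satisfying $P\le L$ and $P(A)=L(A)$. Such a $P$ would immediately give the inequality, since $\int_Q P(A+\D\varphi)=P(A)$.

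I expect this to fail in general, because $L$ is not polyconvex. So the fallback is the AIPS route. One writes $f(z)=Az+\varphi(z)$ and embeds its Beltrami coefficient $\mu_f$ into a holomorphic family $\mu_\lambda=\lambda\mu_f/\|\mu_f\|_\infty$. One then shows that $\lambda\mapsto\int \log$ or $\int |\p f_\lambda|^{p}$-type quantities are subharmonic. Via the identity $L(X)=\lim$ of appropriately normalized $p$-Burkholder functionals, this yields the inequality for \emph{orientation-preserving quasiconformal} $f$ with arbitrary linear boundary data. Here the normalization of $A$ from the first step is used to transfer the identity case to general $A$ by precomposition with the affine map $z\mapsto Az$.

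The main obstacle is removing quasiconformality, i.e.\ handling competitors where $J_f$ changes sign or degenerates. On the set where $|\p_{\bar z}f|\ge|\p_z f|$ the map is not quasiconformal, and the holomorphic-motion argument breaks down. My first attempt would be a decomposition. On $\{|f_{\bar z}|>|f_z|\}$, the bound $L(X)\ge 2|x_+|-1\ge\det X$ still holds with slack. I would try to replace $f$ there by a quasiconformal map with the same boundary values via a Stoilow-type factorization or a convex-integration ``laminate surgery''. This surgery should not increase $\int L$: the rank-one convexity of $L$, applied to the laminates produced by the surgery, is exactly what controls the error.

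Making this replacement rigorous, with quantitative control in $W^{1,2}$ and no loss of the boundary condition, is where I expect the essential difficulty of the theorem to lie.
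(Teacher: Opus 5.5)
There is a genuine gap, and you have the easy and hard cases reversed.

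\textbf{The case $|A|\le1$.} Your argument here rests on the pointwise inequality $L(X)\ge\det X$, which is false. For $|X|>1$ one has $L(X)-\det X=|x_-|^2-(|x_+|-1)^2$. So $L\ge\det$ there amounts to $|x_-|\ge\bigl||x_+|-1\bigr|$, not the reversed inequality you wrote. This fails whenever $|x_+|-|x_-|>1$: for example, $X=2\,\mathrm{Id}$ gives $L(X)=3<4=\det X$. A competitor $A+\D\varphi$ with $|A|\le1$ can take such values on a set of positive measure. Comparison with the null Lagrangian $\det$ then gives nothing. This is just the non-polyconvexity of $L$, and $|A|\le1$ is exactly where the difficulty lies.

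\textbf{The case $|A|>1$.} This is the trivial case. The function $F(X)=2|x_+|-1$ is convex, and $F(A)=L(A)$. Moreover $L\ge F$ everywhere, since on $|X|\le1$ one has $\det X-F(X)=(1-|x_+|-|x_-|)(1-|x_+|+|x_-|)\ge0$. Jensen then gives $\int_Q L(A+\D\varphi)\d x\ge\int_Q F(A+\D\varphi)\d x\ge F(A)=L(A)$. So the calibration you expected to fail does exist there: take $\alpha=0$ and a supporting affine function of $F$ at $A$.

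\textbf{The quasiconformal route.} Your holomorphic-motion argument is therefore aimed at the wrong case. In any event it is not a proof. The removal of quasiconformality by Stoilow factorization or ``laminate surgery'' is left unspecified, and that step is essentially the whole problem.

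\textbf{The missing idea.} For $|A|\le1$ you need a calibration adapted to the competitor rather than to $A$ alone. The paper writes
$L(X)=\sup_{z\in\mc Z(X^1)}\bigl[\det(z,X^2)+z\cdot(X^1-z)\bigr]$,
where $\mc Z(X^1)$ is the unit circle together with the row $X^1$ when $|X^1|\le1$. It then replaces $u^1$ by its $L^2$-projection $w$ onto the convex set of periodic perturbations of $A^1\cdot x$ with $|\D w|\le1$. The variational inequality for this obstacle-type problem gives two facts:
\begin{enumerate}
\item $\D w=\D u^1$ or $|\D w|=1$ almost everywhere, so $\D w\in\mc Z(\D u^1)$;
\item $\int_Q\D w\cdot(\D u^1-\D w)\d x\ge0$.
\end{enumerate}
The first fact gives the pointwise bound $L(\D u)\ge\det(\D w,\D u^2)+\D w\cdot(\D u^1-\D w)$. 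Integrating, using that $\det$ is a null Lagrangian together with the second fact, yields $\int_Q L(\D u)\d x\ge\det A=L(A)$.
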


In the language of the Calculus of Variations, Theorem \ref{thm:L-quasiconvexity} asserts that $L$ is \textit{quasiconvex} \cite{Dacorogna2007}. In the case where $\varphi=\D u$ for some $u\in C^\infty_{\tp{per}}(Q)$ this result had been proved in \cite{Guerra2026b}.

It is well-known that Theorem \ref{thm:L-quasiconvexity} has deep consequences, as we briefly explain.
Set $p^*=\max\{p,p/(p-1)\}$ and for $1<p<\infty$ define the Burkholder integrand \cite{Burkholder1984,Burkholder1989}, $B_p\colon\R^{2\times2}\to\R$, by
\begin{equation}\label{eq:Bp-definition}
B_p(A)=
\begin{cases}
\bigl[(p^*-1)|a_+|-|a_-|\bigr](|a_+|+|a_-|)^{p-1}&\text{if } 1<p<2,\\
\bigl[(p^*-1)|a_-|-|a_+|\bigr](|a_+|+|a_-|)^{p-1}&\text{if } 2\leq p<\infty.
\end{cases}
\end{equation}
We also recall that the Beurling--Ahlfors transform $\mc S$ is the Fourier multiplier characterized by
\begin{equation}\label{eq:BA-definition}
\mc S\circ\p_z=\p_{\bar z},
\end{equation}
see \cite{Astala2009}.
By the arguments in  \cite{Baernstein1997a} (see also \cite{Guerra2026b} for further details), Theorem~\ref{thm:L-quasiconvexity} implies:

\begin{corollary}\label{cor:Bp-BA}
For every $1<p<\infty$, for every $A\in\R^{2\times2}$ and every $\varphi\in C^\infty_{\tp{per}}(Q;\R^2)$,
\[
\int_Q\bigl[B_p(A+\D\varphi)-B_p(A)\bigr]\d x\geq0.
\]
Moreover, the Beurling--Ahlfors transform satisfies
\begin{equation}\label{eq:BA-sharp-norm}
\|\mc S\|_{L^p(\C;\C)\to L^p(\C;\C)}=p^*-1=
\begin{cases}
(p-1)^{-1},&1<p\leq2,\\
p-1,&2\leq p<\infty.
\end{cases}
\end{equation}
\end{corollary}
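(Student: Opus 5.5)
We derive Corollary~\ref{cor:Bp-BA} from Theorem~\ref{thm:L-quasiconvexity} in two stages. First we show $B_p$ is quasiconvex, via the Baernstein--Montgomery-Smith integral representation. Then we pass from quasiconvexity of $B_p$ to the norm bound $\|\mc S\|_p\le p^*-1$. Sharpness of the norm is classical: Lehto's lower bound $\|\mc S\|_p\ge p^*-1$, obtained from radial stretchings $z|z|^{\alpha-1}$, supplies the reverse inequality.

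\textbf{Step 1: layer-cake representation.} Fix $p\ge 2$; the case $1<p<2$ follows by duality, or by the symmetric formula with $a_+$ and $a_-$ swapped. Following \cite{Baernstein1997a}, we look for a constant $c_p>0$ such that
\[
B_p(A)=c_p\int_0^\infty t^{p-1}\,\widetilde L\bigl(A/t\bigr)\,\frac{\d t}{t}
\]
for $A\ne0$. Here $\widetilde L$ is obtained from $L$ by precomposing with a fixed real-linear isometry that exchanges the roles of $a_+$ and $a_-$, namely $A\mapsto A\bar J$ with $\bar J$ complex conjugation. We may also need to add a null Lagrangian multiple of $\det$. Precomposition with a linear map preserves quasiconvexity, and so does adding $\det$. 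One checks the formula by splitting the integral at $t=|A|$. For $t\ge |A|$ the integrand is quadratic, $(|a_+|^2-|a_-|^2)/t^2$. For $t<|A|$ it is $2|a_\pm|/t-1$. Integrating produces a combination of $|a_\pm||A|^{p-1}$ and $|A|^p$, and matching coefficients determines $c_p$ and the multiple of $\det$. The integral $\int_{|A|}^\infty t^{p-3}\,\d t$ converges only for $p<2$, so for $p\ge2$ we instead subtract the value at $A=0$, or use the complementary integrand $L-\det$ on the quadratic region. This bookkeeping is routine but must be done carefully.

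\textbf{Step 2: quasiconvexity of $B_p$.} By Theorem~\ref{thm:L-quasiconvexity} applied to $A/t$ and $\varphi/t$, for each $t>0$ we have $\int_Q[\widetilde L((A+\D\varphi)/t)-\widetilde L(A/t)]\,\d x\ge0$. Integrating against $c_p t^{p-2}\,\d t$ and using Fubini gives the first claim. Fubini is justified because $L$ is Lipschitz with linear growth and $\D\varphi$ is bounded, so after the quadratic region cancellation the $t$-integrand is integrable near $0$ and $\infty$. This step is the main technical obstacle: the regions near $t=0$ and $t=\infty$ require the correct normalization from Step 1.

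\textbf{Step 3: the norm bound.} Take $f\in C_c^\infty(\C)$ and set $u=\p_z^{-1}f$, defined via the Cauchy transform. Then $\D u$ has $a_+=\p_z u=f$ and $a_-=\p_{\bar z}u=\mc S f$. Rescale to place the support inside a large period cell and periodize; the error terms vanish in the limit because of the decay of $\mc S f$. Quasiconvexity at $A=0$, together with the positive $p$-homogeneity of $B_p$, gives $\int B_p(\D u)\ge B_p(0)=0$. We then use the elementary inequality
\[
|a_-|^p-(p^*-1)^p|a_+|^p\le \beta_p\,B_p(A),\qquad \beta_p>0,
\]
which is Burkholder's majorization. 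With this, $\|\mc S f\|_p\le (p^*-1)\|f\|_p$, and combined with Lehto's lower bound this yields \eqref{eq:BA-sharp-norm}.
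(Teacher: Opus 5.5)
Your architecture is the one the paper invokes: the Baernstein--Montgomery-Smith layer-cake representation of $B_p$ through $L$, followed by Burkholder's majorization and Lehto's lower bound. However, the two computations that carry all of the content are wrong as written.

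First, the representation formula. The weight must be $t^{p-1}\d t$, not $t^{p-1}\d t/t$. The substitution $t=\lambda\tau$ shows that $\int_0^\infty t^{q}L(\lambda A/t)\d t$ is homogeneous of degree $q+1$ in $\lambda$. Moreover, with your weight the contribution of $\{t<|A|\}$ contains $\int_0^{|A|}t^{p-3}\d t$, which diverges for $p\leq2$; your remark about $\int_{|A|}^\infty t^{p-3}\d t$ in fact belongs to the correct weight. With the correct weight no conjugation is needed. Writing out both regions, one finds
\[
\begin{aligned}
B_p(A)&=\frac{p(2-p)}{2}\int_0^\infty t^{p-1}L(A/t)\d t, &&1<p<2,\\
B_p(A)&=\frac{p(p-1)(p-2)}{2}\int_0^\infty t^{p-1}(L-\det)(A/t)\d t, &&p>2.
\end{aligned}
\]
The case $B_2=-\det$ has to be treated separately, since at $p=2$ both integrals diverge logarithmically in general.

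Subtracting $\det$ does two things at once: it removes the divergence at $t=\infty$, and it moves the factor $p-1$ from $|a_+|$ onto $|a_-|$. Subtracting the value at $A=0$ does nothing, since $L(0)=0$. If you first precompose with your $\bar J$, the only multiple of $\det$ that restores convergence is $+\det$. The integral then yields a multiple of $B_p(A\bar J)=[(p-1)|a_+|-|a_-|](|a_+|+|a_-|)^{p-1}$ rather than of $B_p(A)$, so the coefficient matching you describe fails; a second precomposition repairs this. Also, $L^p$ duality transfers the norm bound but not the quasiconvexity of $B_p$ for $1<p<2$; that case comes from the first formula, using $L$ itself. Once the weights are right, Fubini is harmless, because $\int_0^\infty t^{p-1}|L(M/t)|\d t\leq C_p|M|^p$ for $1<p<2$, and likewise for $L-\det$ when $p>2$.

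Second, the majorization in Step 3 has the wrong sign. For $1<p<2$, Burkholder's inequality reads $|a_-|^p-(p^*-1)^p|a_+|^p\leq-\alpha_pB_p(A)$ with $\alpha_p>0$. Your version with $+\beta_pB_p(A)$ is false: take $a_+=0\neq a_-$. Even if it held, combined with $\int_{\C}B_p(\D u)\geq0$ it would only bound $\int_{\C}(|\mathcal Sf|^p-(p^*-1)^p|f|^p)\d x$ by a nonnegative quantity, which proves nothing.

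For $p\geq2$, the paper's $B_p$ carries the factor $p-1$ on $|a_-|$, so the relevant majorization is $|a_+|^p-(p-1)^p|a_-|^p\leq-\alpha_pB_p(A)$. It gives $\|\p_zu\|_p\leq(p-1)\|\p_{\bar z}u\|_p$, i.e.\ the bound for $\mathcal S^{-1}$, and you conclude via $\mathcal S^{-1}f=\overline{\mathcal S\bar f}$.

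With these repairs, the proof is completed by your cutoff-and-rescaling step and Lehto's lower bound. The cutoff step is legitimate: since $u=O(|z|^{-1})$, a standard cutoff $\chi_R$ gives $\|u\,\D\chi_R\|_{L^p}\lesssim R^{2/p-2}\to0$.
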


Previously, quasiconvexity inequalities for the Burkholder function under sign restrictions, together with related lower semicontinuity results, were established in \cite{Astala2022,Astala2024,Astala2012,GuerraKristensen2021}.
Concerning the Beurling--Ahlfors transform $\mc S$,  we note that the lower bound in \eqref{eq:BA-sharp-norm} is classical \cite{Lehto1965} and that the novelty is the upper bound. 
Previous, suboptimal bounds, had been obtained in  \cite{Banuelos2008,Banuelos1995,Borichev2013,Dragicevic2005,Volberg2004}, and we refer the reader to the survey \cite{Banuelos2010} for a wealth of additional information on this problem.

Corollary \ref{cor:Bp-BA} settles a conjecture due to T.\ Iwaniec \cite{Iwaniec1982}. This conjecture has played a central role in Geometric Function Theory, and Corollary \ref{cor:Bp-BA} implies the sharp higher integrability of derivatives of quasiconformal maps in the plane, originally obtained by complex-analytical methods in \cite{Astala1994}. These methods do not seem to generalize to higher-dimensions.

Instead, the proof of Theorem \ref{thm:L-quasiconvexity} is not complex-analytic, and it extends to higher-dimensions, implying a similar results for a more general version of the Beurling--Ahlfors transform, defined on forms \cite{Donaldson1989,Iwaniec1993b}, see also \cite{banuelos1997martingale,Duse2020,petermichl2011new}. As anticipated in the remarkable joint works of T.\ Iwaniec with G.\ Martin and C.\ Sbordone, this has further important consequences for the sharp higher integrability theory of quasiconformal mappings in even dimensions \cite{Iwaniec1993b} and for solutions of elliptic PDEs in all dimensions \cite{Iwaniec2001a}, see also \cite{Iwaniec2002}. The proof of Theorem \ref{thm:L-quasiconvexity} can also be generalized in order to encompass the very large family of integrands studied in \cite{Faraco2008,Kirchheim2008,Szekelyhidi2005}, with applications to the study of the Monge--Ampère equation \cite{Ambrosio2011,DePhilippis2014,DePhilippis2023}. These results will be developed and presented in the next articles in this series.

\medskip

Finally, the meta-mathematical purpose of this series of articles is to showcase our attempt how, through collaboration and cooperation, communities of mathematicians can develop collective ownership and understanding of deep, important mathematical results in their respective subfields. 

\section{Proof}\label{sec:proof}

For a matrix $A$ with rows $A^1,A^2$, put
\[
J=\begin{bmatrix}0&-1\\1&0\end{bmatrix},\qquad
\det A = \det(A^1,A^2)=JA^1\cdot A^2.
\]
Define
\[
\begin{aligned}
G_z(A^1,A^2)&=\det(z,A^2)+z\cdot(A^1-z),\\
F(A)&=\sup_{|z|=1}G_z(A^1,A^2)=|A^2+JA^1|-1=2|a_+|-1,
\end{aligned}
\]
and note that $F$ is convex. 
\begin{lemma}
We have
\begin{equation}\label{eq:L-row-representation}
L(A)=\sup_{z\in\mc Z(A^1)}G_z(A^1,A^2),\qquad
\mc Z(A^1)=\mb S^1\cup
\begin{cases}
\{A^1\},&\text{if }|A^1|\leq1,\\
\emptyset,&\text{if }|A^1|>1.
\end{cases}
\end{equation}
\end{lemma}

\begin{proof}
Indeed, $G_{A^1}(A^1,A^2)=\det A$ and so
$$
\sup_{z\in\mathcal Z(A^1)}G_z(A^1,A^2)
=
\begin{cases}
\max\{\det A,F(A)\},&|A^1|\leq1,\\
F(A),&|A^1|>1.
\end{cases}
$$
 We note the identities
\[
\det A-F(A)=(1-|a_+|-|a_-|)(1-|a_+|+|a_-|),\qquad
\bigl||a_+|-|a_-|\bigr|\leq|A^1|\leq|a_+|+|a_-|.
\]
If $|a_+|+|a_-|\leq1$, then $|A^1|\leq1$ and $\det A\geq F(A)$. If $|a_+|+|a_-|>1$ and $|A^1|\leq1$, then $|a_+|-|a_-|\leq1$ and $\det A\leq F(A)$. If $|A^1|>1$, then $|a_+|+|a_-|>1$ and $\mc Z(A^1)=\mb S^1$. The conclusion follows.
\end{proof}

\begin{proof}[Proof of Theorem~\ref{thm:L-quasiconvexity}]
Let $u(x)=(u^1,u^2)(x)=Ax+\varphi(x)$, where $\varphi\in C^\infty_\tp{per}(Q)$. 
Note that $L\geq F$ everywhere and so, if $|A|>1$, convexity gives
\[
\int_Q L(\D u)\d x\geq\int_Q F(\D u)\d x\geq F(A)=L(A).
\]
Hence we can assume that $|A|\leq1$, and then $|A^1|\leq1$. Apply Lemma~\ref{lem:scalar-projection} below with $K=\overline B$, the closed unit disk, to $u^1(x)=A^1\cdot x+\eta(x)$: we find $\zeta \in W^{1,\infty}_\tp{per}(Q)$ such that $w(x)=A^1\cdot x+\zeta(x)$ satisfies $|\D w|\leq1$ and either $\D w=\D u^1$ or $|\D w|=1$ almost everywhere. Hence $\D w\in\mc Z(\D u^1)$, and \eqref{eq:L-row-representation} yields
\[
L(\D u)\geq\det(\D w,\D u^2)+\D w\cdot(\D u^1-\D w).
\]
Since the determinant is a null Lagrangian and $w(x)-A^1\cdot x, u^2(x)-A^2\cdot x$ are periodic we have
\[
\int_Q\det(\D w,\D u^2)\d x=\det A.
\]
Inequality \eqref{eq:projection-integral-sign} then gives
\[
\int_Q L(\D u)\d x\geq\det A=L(A),
\]
as wished.
\end{proof}

To complete the proof we recall the following standard lemma, whose proof we include for the sake of completeness.

\begin{lemma}\label{lem:scalar-projection}
Let $K\subset\R^2$ be nonempty, compact and convex and take $p\in K$. For $\eta\in W^{1,\infty}_{\tp{per}}(Q)$, set $u(x)=p\cdot x+\eta(x)$ and let $\zeta$ be the $L^2(Q)$-projection of $\eta$ onto
\[
\mc C=\{\xi\in W^{1,\infty}_{\tp{per}}(Q):p+\D\xi\in K\text{ a.e.}\}.
\]
Then $w(x)=p\cdot x+\zeta(x)$ satisfies either $\D w=\D u$ or $\D w\in\partial K$ almost everywhere, and
\begin{equation}\label{eq:projection-integral-sign}
\int_Q\D w\cdot(\D u-\D w)\d x\geq0.
\end{equation}
\end{lemma}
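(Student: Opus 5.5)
The plan is to use only the variational inequality characterizing $L^2$ projections onto closed convex sets, tested against two kinds of competitors: translates of $\zeta$ for the integral inequality, and local upward or downward bumps for the pointwise dichotomy.

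\textbf{Existence of $\zeta$.} First I check that $\mc C$ is nonempty, convex and closed in $L^2(Q)$, so that $\zeta$ is well defined.
\begin{enumerate}
\item Nonemptiness: $0\in\mc C$ because $p\in K$.
\item Convexity: $\mc C$ is convex because $K$ is.
\item Closedness: if $\xi_j\in\mc C$ and $\xi_j\to\xi$ in $L^2$, the $\xi_j$ are uniformly Lipschitz since $K$ is bounded. By Arzel\`a--Ascoli and weak-$*$ compactness of $\D\xi_j$ in $L^\infty$, $\xi_j\to\xi$ uniformly and $\D\xi_j\overset{*}{\rightharpoonup}\D\xi$. Since $K$ is closed and convex, $p+\D\xi\in K$ a.e.
\end{enumerate}
The projection is then characterized by
\[
\int_Q(\eta-\zeta)(\xi-\zeta)\d x\leq0\qquad\text{for all }\xi\in\mc C .
\]

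\textbf{Proof of \eqref{eq:projection-integral-sign}.} By periodicity, $\int_Q p\cdot(\D\eta-\D\zeta)\d x=0$, so \eqref{eq:projection-integral-sign} reduces to
\[
\int_Q\D\zeta\cdot\D(\eta-\zeta)\d x\geq0 .
\]
The key observation is that $\mc C$ is invariant under translations: $\zeta(\cdot+h)\in\mc C$ for every $h\in\R^2$. I test the variational inequality with $\xi=\zeta(\cdot+h)$ and with $\xi=\zeta(\cdot-h)$ and add the two. This gives
\[
\int_Q(\eta-\zeta)\,\bigl[\zeta(x+h)+\zeta(x-h)-2\zeta(x)\bigr]\d x\leq0 .
\]
A discrete integration by parts on the torus rewrites the left side as
\[
-\int_Q\bigl[(\eta-\zeta)(x+h)-(\eta-\zeta)(x)\bigr]\bigl[\zeta(x+h)-\zeta(x)\bigr]\d x ,
\]
so this product integral is $\geq0$. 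Now take $h=se_i$, divide by $s^2$ and let $s\to0$. Lipschitz difference quotients are uniformly bounded and converge a.e. to the partial derivatives, so dominated convergence yields $\int_Q\p_i(\eta-\zeta)\,\p_i\zeta\d x\geq0$. Summing over $i=1,2$ gives the claim.

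\textbf{The pointwise dichotomy.} I would show that $\eta=\zeta$ a.e. on the set $U=\{x:p+\D\zeta(x)\in\operatorname{int}K\}$. Granting this, the standard fact that gradients of Lipschitz functions coincide a.e. on the set where the functions coincide gives $\D\eta=\D\zeta$ a.e. on $U$. Off $U$ we have $p+\D\zeta\in\p K$, which is exactly the dichotomy.

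The argument is by contradiction. Suppose $\{\eta>\zeta\}\cap U$ has positive measure; the case $\eta<\zeta$ is symmetric. This set is a subset of the open set $\{\eta>\zeta\}$. Pick a density point $x_0$ of this set that is also a point of approximate differentiability of $\zeta$ with $p+\D\zeta(x_0)\in\operatorname{int}K$. I would then build the competitor
\[
\xi=\max\bigl(\zeta,\;\zeta(x_0)+\D\zeta(x_0)\cdot(x-x_0)+\varepsilon-M|x-x_0|\bigr),
\]
with $M$ so small that $p+\D\zeta(x_0)+B_M\subset K$. Then $\xi$ has gradient in $p+\D\zeta(x_0)+B_M$ wherever it differs from $\zeta$, so $\xi\in\mc C$. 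For small $\varepsilon$, the set $\{\xi\neq\zeta\}$ is a small neighbourhood of $x_0$ on which $\xi\geq\zeta$ and $\eta>\zeta$. Hence $\int_Q(\eta-\zeta)(\xi-\zeta)\d x>0$, contradicting the projection inequality.

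\textbf{Main obstacle.} The delicate step is guaranteeing that $\{\xi\neq\zeta\}$ is nonempty and stays localized near $x_0$, where $\eta>\zeta$. With only approximate differentiability of $\zeta$, the cone may never rise above $\zeta$, or it may exit the region $\{\eta>\zeta\}$. To handle this I would use the uniform continuity of $\zeta$ together with the fact that $\{\eta>\zeta\}$ is open, and if necessary replace the cone by an averaged or mollified affine approximation of $\zeta$ on a small ball around $x_0$.
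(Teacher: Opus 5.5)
Your existence argument and your proof of \eqref{eq:projection-integral-sign} are correct and coincide with the paper's: translation invariance of $\mc C$, adding the two variational inequalities, discrete integration by parts, and passage to the limit. For the dichotomy you also follow the paper's strategy. Your cone $\ell(x)=\zeta(x_0)+\D\zeta(x_0)\cdot(x-x_0)+\varepsilon-M|x-x_0|$ is a fine replacement for the paper's $c(x)=w(x_0)+\varepsilon-H(x_0-x)$; it is exactly the upper bound on $c$ that the paper uses. However, the step you flag as the main obstacle is left open, and as written the competitor fails. Taken on all of $\R^2$, $\xi=\max(\zeta,\ell)$ is not periodic when $|\D\zeta(x_0)|>M$, since then $\ell\to+\infty$ in the direction of $\D\zeta(x_0)$. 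Moreover, for every $\varepsilon>0$ the set $\{\ell>\zeta\}$ may have components far from $x_0$, outside $\{\eta>\zeta\}$. So the claim that $\{\xi\neq\zeta\}$ is a small neighbourhood of $x_0$ is false without a cutoff. The obstacle itself rests on a false premise. Since $\zeta$ is Lipschitz, Rademacher's theorem makes it differentiable in the classical sense at a.e.\ point; equivalently, a Lipschitz function that is approximately differentiable at a point is differentiable there. Density points and mollified affine approximations are not needed.

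To close the gap, pick $x_0\in\{\eta>\zeta\}$ at which $\zeta$ is differentiable with $b_0=p+\D\zeta(x_0)\in\operatorname{int}K$, and $M>0$ with $\overline{B_M(b_0)}\subset K$. Since $\eta-\zeta$ is continuous, choose $0<r<1/2$ with $B_r(x_0)\subset\{\eta>\zeta\}$ and $|\zeta(x_0+y)-\zeta(x_0)-\D\zeta(x_0)\cdot y|\leq\frac M2|y|$ for $|y|\leq r$. Then $\ell(x_0+y)-\zeta(x_0+y)\leq\varepsilon-\frac M2|y|$ for $|y|\leq r$. Hence for $0<\varepsilon<Mr/2$ the function $(\ell-\zeta)_+$ vanishes on $B_r(x_0)\setminus B_{2\varepsilon/M}(x_0)$ and equals $\varepsilon$ at $x_0$. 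Set $\xi=\zeta+(\ell-\zeta)_+\chi_{B_r(x_0)}$, extended periodically. It is Lipschitz, it belongs to $\mc C$ because $p+\D\ell\in\p B_M(b_0)\subset K$, and it gives $\int_Q(\eta-\zeta)(\xi-\zeta)\d x>0$, the desired contradiction. This is precisely how the paper localizes ($c<w$ on $\p B_r(x_0)$ and $c(x_0)>w(x_0)$), and with it your proof is complete.
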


\begin{proof}
The set $\mc C$ is nonempty, closed and convex in $L^2(Q)$. By \cite[Chapter~I, Lemma~2.1 and Theorem~2.3]{kinderlehrer2000introduction}, the projection $\zeta$ exists uniquely and, with $h(x)=\eta(x)-\zeta(x)$, satisfies
\begin{equation}\label{eq:projection-variational-inequality}
\int_Q h(\xi-\zeta)\d x\leq0\qquad\forall\xi\in\mc C.
\end{equation}

We have $\D w=\D u$ a.e.\ on $\{h=0\}$. To prove the first assertion, it remains to show that $\D w\in\partial K$ almost everywhere on $\{h\ne0\}$, so we can assume that $K$ has non-empty interior. Let
\[
H(y)=\max_{z\in K}z\cdot y
\]
be its support function. Then $H$ is Lipschitz and $\D H\in K$ almost everywhere.
Suppose, towards a contradiction, that $h(x_0)\ne0$, $w$ is differentiable at $x_0$, and $b_0=\D w(x_0)\in\operatorname{int}K$. By changing signs, we may assume $h(x_0)>0$. Choose $\rho>0$ such that $B_\rho(b_0)\subset K$, and then
\[
H(y)\geq b_0\cdot y+\rho|y|\qquad\forall y\in\R^2.
\]
By continuity of $h$ and differentiability of $w$ at $x_0$, we can choose a ball $B_r(x_0)$  where $h>0$  and
\[
|w(x_0+y)-w(x_0)-b_0\cdot y|\leq\frac\rho2|y|\qquad\text{for }|y|\leq r.
\]
For $0<\varepsilon<\rho r/2$, define
\[
c(x)=w(x_0)+\varepsilon-H(x_0-x),
\]
which has gradient in $K$, $c<w$ on $\partial B_r(x_0)$ and  $c(x_0)>w(x_0)$. Thus $(c-w)_+$ has compact support in $B_r(x_0)$ and, extending it by zero outside $B_r(x_0)$, we obtain $\zeta+(c-w)_+\in\mc C$, since $p\cdot x + \zeta+(c-w)_+ = \max \{w,c\}$ in $B_r(x_0)$.
But we have $\int_Q h(c-w)_+\d x>0$, contradicting \eqref{eq:projection-variational-inequality}.

To prove \eqref{eq:projection-integral-sign}, note that $\zeta(\cdot\pm te_i)\in\mc C$ for $i=1,2$ and $t\ne0$. Adding the corresponding inequalities \eqref{eq:projection-variational-inequality} and changing variables gives
\begin{align*}
0&\geq\int_Q h(x)\bigl[\zeta(x+te_i)+\zeta(x-te_i)-2\zeta(x)\bigr]\d x\\
&=-\int_Q\bigl[h(x+te_i)-h(x)\bigr]\bigl[\zeta(x+te_i)-\zeta(x)\bigr]\d x.
\end{align*}
Dividing by $t^2$ and letting $t\to0$ yields $\int_Q\partial_i h\,\partial_i\zeta\d x\geq0$. Summing over $i$ proves \eqref{eq:projection-integral-sign}, since $\D w=p+\D\zeta$, $\D u-\D w=\D h$, and $\int_Q\D h\d x=0$ by periodicity.
\end{proof}

{\small
\bibliographystyle{abbrv-andre}
\bibliography{library}
}
\vspace{2em}
\noindent
{\small
\begin{minipage}[b]{\textwidth}
\textsc{Andrea Agazzi} \\
Department of Mathematics and Statistics, University of Bern \\
Alpeneggstrasse 22, 3012 Bern, CH \\
\texttt{andrea.agazzi@unibe.ch}
\end{minipage}

\vspace{1em}
\noindent
\begin{minipage}[b]{\textwidth}
\textsc{Kari Astala} \\
Department of Mathematics and Statistics, University of Helsinki \\
P.O. Box 68, 00014 University of Helsinki, Finland \\
\texttt{kari.astala@helsinki.fi}
\end{minipage}

\vspace{1em}
\noindent
\begin{minipage}[b]{\textwidth}
\textsc{Giuseppe Bruno} \\
Department of Mathematics and Statistics, University of Bern \\
Alpeneggstrasse 22, 3012 Bern, CH \\
\texttt{giuseppe.bruno@unibe.ch}
\end{minipage}

\vspace{1em}
\noindent
\begin{minipage}[b]{\textwidth}
\textsc{Gabriele Cassese} \\
Mathematical Institute, University of Oxford \\
Andrew Wiles Building, Woodstock Road, Oxford OX2 6GG, UK \\
\texttt{gabriele.cassese@maths.ox.ac.uk}
\end{minipage}

\vspace{1em}
\noindent
\begin{minipage}[b]{\textwidth}
\textsc{Hang Chang} \\
School of Mathematical Sciences \\
Fudan University \\
Shanghai 200433, China \\
\texttt{26210180112@m.fudan.edu.cn}
\end{minipage}

\vspace{1em}
\noindent
\begin{minipage}[b]{\textwidth}
\textsc{Daniel Faraco} \\
Departamento de Matem\'aticas, Universidad Aut\'onoma de Madrid \\
Ciudad Universitaria de Cantoblanco, 28049 Madrid, Spain \\
\textit{and} \\
Instituto de Ciencias Matem\'aticas, CSIC-UAM-UC3M-UCM, Madrid, Spain \\
\texttt{daniel.faraco@uam.es}
\end{minipage}

\vspace{1em}
\noindent
\begin{minipage}[b]{\textwidth}
\textsc{Andr\'e Guerra} \\
Department of Pure Mathematics and Mathematical Statistics, University of Cambridge \\
Wilberforce Rd, Cambridge CB3 0WB, UK \\
\texttt{adblg2@cam.ac.uk} \\
\textit{and} \\
Departamento de Matem\'atica, Instituto Superior T\'ecnico \\
Av. Rovisco Pais 1049-001 Lisboa, Portugal \\
\texttt{andre.l.guerra@tecnico.ulisboa.pt}
\end{minipage}

\vspace{1em}
\noindent
\begin{minipage}[b]{\textwidth}
\textsc{Bernd Kirchheim} \\
Mathematisches Institut, Universit\"at Leipzig \\
Augustusplatz 10, 04109 Leipzig, Germany \\
\texttt{kirchheim@math.uni-leipzig.de}
\end{minipage}

\vspace{1em}
\noindent
\begin{minipage}[b]{\textwidth}
\textsc{Aleksis Koski} \\
Department of Mathematics and Systems Analysis, Aalto University \\
P.O. Box 11100, FI-00076 Aalto, Finland \\
\texttt{aleksis.koski@aalto.fi}
\end{minipage}

\vspace{1em}
\noindent
\begin{minipage}[b]{\textwidth}
\textsc{Jan Kristensen} \\
Mathematical Institute, University of Oxford \\
Andrew Wiles Building, Woodstock Road, Oxford OX2 6GG, UK \\
\texttt{jan.kristensen@maths.ox.ac.uk}
\end{minipage}

\vspace{1em}
\noindent
\begin{minipage}[b]{\textwidth}
\textsc{Federico Pasqualotto} \\
Department of Mathematics, University of California San Diego \\
La Jolla, CA 92093, USA \\
\texttt{fpasqualotto@ucsd.edu}
\end{minipage}

\vspace{1em}
\noindent
\begin{minipage}[b]{\textwidth}
\textsc{Istv\'an Prause} \\
Mathematics, Faculty of Science and Engineering, \AA bo Akademi University \\
Tuomiokirkontori 3, 20500 Turku, Finland \\
\texttt{istvan.prause@abo.fi}
\end{minipage}

\vspace{1em}
\noindent
\begin{minipage}[b]{\textwidth}
\textsc{Riccardo Tione} \\
Dipartimento di Matematica ``Giuseppe Peano'', Universit\`a di Torino \\
Via Carlo Alberto 10, 10123 Torino, Italy \\
\texttt{riccardo.tione@unito.it}
\end{minipage}

\vspace{1em}
\noindent
\begin{minipage}[b]{\textwidth}
\textsc{Vladim\'ir \v{S}ver\'ak} \\
School of Mathematics, University of Minnesota \\
206 Church Street SE, Minneapolis, MN 55455, USA \\
\texttt{sverak@umn.edu}
\end{minipage}

\vspace{1em}
\noindent
\begin{minipage}[b]{\textwidth}
\textsc{L\'aszl\'o Sz\'ekelyhidi} \\
Max Planck Institute for Mathematics in the Sciences \\
Inselstrasse 22, 04103 Leipzig, Germany \\
\texttt{Laszlo.Szekelyhidi@mis.mpg.de}
\end{minipage}\par
}

\end{document}